\documentclass[english,a4paper,12pt]{article}
\usepackage[utf8]{inputenc}
\usepackage{babel,amsmath,amsthm,amssymb,mathrsfs,graphicx}
\usepackage[sort&compress,numbers,square,sort]{natbib}

\makeatletter
\let\c@paragraph\relax
\makeatother
\newcounter{paragraph}[section]
\setcounter{secnumdepth}{4}

\theoremstyle{plain}
\newtheorem*{theorem}{Theorem}
\newtheorem*{lemma}{Lemma}
\newtheorem*{proposition}{Proposition}
\theoremstyle{definition}
\newtheorem*{remark}{Remarks}

\renewcommand{\tilde}{\widetilde}
\renewcommand{\P}{\mathsf{P}}
\newcommand{\E}{\mathsf{E}}
\newcommand{\I}{\mathbf{I}}
\newcommand{\M}{{\mathfrak{M}_T}}
\newcommand{\F}{\mathscr{F}}

\title{Optimal stopping problems for a Brownian motion with a disorder on a
finite interval}
\author{A.\,N. Shiryaev\thanks{Steklov Mathematical Instutute, Moscow,
Russia; email: albertsh@mi.ras.ru.} \and 
M.\,V. Zhitlukhin\thanks{Steklov Mathematical Instutute, Moscow,
Russia, and The University of Manchester, UK; email: mikhailzh@mi.ras.ru.}}

\begin{document}
\maketitle

\begin{abstract}
We consider optimal stopping problems for a Brownian motion and a geometric
Brownian motion with a ``disorder'', assuming that the moment of a disorder
is uniformly distributed on a finite interval. Optimal stopping rules are
found as the first hitting times of some Markov process (the Shiryaev--Roberts
statistic) to time-dependent boundaries, which are characterized by certain
Volterra integral equations. 

The problems considered are related to mathematical finance and can be
applied in questions of choosing the optimal time to sell an asset with
changing trend.

\bigskip
\textbf{Keywords:} optimal stopping problems; disorder detection problems;
Shiryaev--Roberts statistic; Volterra equations.
\end{abstract}

\section{Introduction}
\label{formulation}
\paragraph{}
In this paper we consider problems of stopping a Brownian motion or a
geometric Brownian motion optimally on a finite time interval when it has a
\emph{disorder}, i.\,e. its drift coefficient changes at some unknown moment of
time from a positive value to a negative one. We look for the stopping
time that maximizes the expected value of the stopped process.

Let $B=(B_t)_{t\ge0}$ be a standard Brownian motion defined on a probability
space $(\Omega,\F,\P)$. Suppose we sequentially observe
the process $X=(X_t)_{t\ge0}$,
\[
X_t = \mu_1t + (\mu_2-\mu_1)(t-\theta)^+ + \sigma B_t,
\]
or, equivalently in stochastic differentials,
\[
d X_t = \bigl[\mu_1 \I(t<\theta) + \mu_2 \I(t\ge \theta)\bigr] dt + \sigma d
B_t, \qquad X_0=0,
\]
where $\mu_1>0>\mu_2$, $\sigma>0$ are known numbers and $\theta$ is an
unknown \emph{time of a disorder} -- a moment when the drift coefficient of
$X$ changes from value $\mu_1$ to value $\mu_2$. The process $X$ is
called a (linear) \emph{Brownian motion with a disorder}.

Adopting the Bayesian approach, we let $\theta$ be a random variable defined
on $(\Omega,\F,\P)$ and independent of $B$. In this paper, in view of
applications (see below), we assume that $\theta$ is \emph{uniformly
distributed} on a finite interval $[0,T]$, possibly, with mass at $t=0,T$,
i.\,e. the distribution function $G(t) = \P(\theta\le t)$ is given by
\[
G(t) = G(0) + \rho t\text{ for } 0\le t< T, \qquad G(T) = 1,
\]
where $G(0)\in[0,1)$ is the probability that the disorder presents from the
beginning, and $0< \rho \le (1-G(0))/T$ is the density of $\theta$. The
probability $\P(\theta=1)=1-G(T-)$ may be strictly positive.

Let $\M$ denote the class of all stopping times $\tau\le T$ of the process
$X$. We consider the following two optimal stopping problems for $X$ and the
exponent of $X$ (a \emph{geometric Brownian motion with a disorder}):
\begin{equation}
V^{(l)} = \sup_{\tau\in\M} \E X_\tau,
\qquad V^{(g)} = \sup_{\tau\in\M} \E \exp(X_\tau - \sigma^2\tau/2).\label{1}
\end{equation}
The problems consist in finding the values $V^{(l)}$, $V^{(g)}$ and finding
the stopping times $\tau_*^{(l)}$, $\tau_*^{(g)}$ at which the suprema are
attained (if such stopping times exist). The superscript ${(l)}$ stands for
the problem for a \emph{linear} Brownian motion, while $(g)$ stands for a
\emph{geometric} Brownian motion.

Observe that, roughly speaking, the processes $X_t$ and
$\exp(X_t-\sigma^2t/2)$ increase on average ``up to time $\theta$'' and
decrease on average ``after time $\theta$''. But since $\theta$ is not a
stopping time, we cannot simply take $\tau_*=\theta$ and need to stop by
detecting the disorder based on sequential observation of $X$.

We provide solutions to problems \eqref{1} using the results obtained in
the recent paper \cite{ZS12-e}. The central idea is based on a reduction to
a Markovian optimal stopping problem using a change of measure. This
approach has already been applied in the literature, but we were able to
generalize it to the case of a finite time interval.

\paragraph{}
For economic applications, the problems considered are related to the
question when to quit a financial ``bubble''. By a bubble we mean growth
of an asset price based mainly on the expectation of higher future price;
eventually a bubble bursts and the price starts to decline.

Suppose that an asset price is modelled by a geometric Brownian
motion with a disorder $S=(S_t)_{t\ge0}$:
\[
S_t = \exp(X_t - \sigma^2t/2),
\]
or, equivalently,
\[
d S_t = \bigl[\mu_1 \I(t<\theta) + \mu_2 \I(t\ge\theta)\bigr] S_t dt +
\sigma S_t d B_t, \qquad S_0 = 1.
\]
Thus the price initially has a positive trend, which changes to a negative
one at an unknown time $\theta$.

Let $t=0$ correspond to the ``current'' moment of time, when one is in a
long position on an asset with positive trend. Usually, it is possible to
predict that the trend will become negative by some (maybe, distant) time $T$ in
the future. Then one is interested in the question when it is optimal to
sell the asset maximizing the gain.

If nothing is known about the actual distribution of $\theta$, it is natural
to assume that $\theta$ is uniformly distributed on $[0,T]$ (since the
uniform distribution has the maximum entropy on a finite
interval). Interpreting the quantity $\E S_\tau$ as the average gain
achieved by selling the asset at time $\tau$, problem \eqref{1} for a
geometric Brownian motion seeks for the optimal time to sell the asset. The
problem for a linear Brownian motion can be thought of as a problem of
finding the optimal time to sell the asset provided that a trader has the
logarithmic utility function, i.\,e. maximizes $\E\log(S_\tau)$, which is
equivalent to maximizing $\E X_\tau$ with $\mu_1' = \mu_1 -
\sigma^2/2$, $\mu_2' = \mu_2 - \sigma^2/2$.

An interesting result that follows from the solution of problems \eqref{1}
is the qualitative difference between \emph{risk-neutral} traders who
maximize $\E S_\tau$ and \emph{risk-averse} traders who maximize
$\E\log(S_\tau)$: if $\P(\theta<1)=1$ (i.\,e. the distribution of $\theta$
has no mass at $t=T$), then a risk-neutral trader will sell the asset
strictly before time $T$ with probability one ($\P(\tau_*^{(g)}<T)=1$),
while a risk-averse trader will wait until the end of the time interval with
positive probability ($\P(\tau_*^{(l)}=T)>0$); see the Theorem and 
Remark~1 in Section~\ref{results}.

\paragraph{}
Problems \eqref{1} was considered in the papers \cite{BL97,NS09,EL13},
assuming that the moment of disorder $\theta$ is exponentially
distributed. In \cite{BL97}, the problem for a geometric Brownian motion was
solved. It was shown that if $\mu_1,\mu_2,\sigma$ satisfy some relation, the
optimal stopping time is the first hitting time of the posterior probability
process $\pi_t = \P(\theta\le t \mid \F_t^X)$, where
$\F_t^X=\sigma(X_s;s\le t)$, to some level. In \cite{EL13} this
result was extended to all values of $\mu_1,\mu_2,\sigma$ and the optimal
stopping level was found. In \cite{NS09}, the problem for a linear Brownian
motion was considered on a finite interval, i.\,e. assuming that one should
choose $\tau$ not exceeding some time horizon $T$ (but the disorder may
happen after $T$). It turned out that the problem is equivalent to the
original Bayesian setting of the disorder detection problem when one seeks for a
stopping time minimizing the average detection delay and the probability of
a false alarm (see e.\,g. \cite{S63a-e,S76-e,GP06}). The paper \cite{NS09}
also briefly discusses the optimal stopping problem for a geometric Brownian
motion on a finite interval, but does not provide an explicit solution.

\section{The main result}
\label{results}
\paragraph{}
Let $\mu=(\mu_1-\mu_2)/\sigma$ denote the \emph{signal-to-noise ratio}.  For
convenience of notation, introduce the process $\tilde X = (\tilde
X_t)_{t\ge0}$, $\tilde X_t = (X_t - \mu_1 t)/\sigma$, which is a Brownian
motion with the unit diffusion coefficient and the drift coefficient
changing at time $\theta$ from value $0$ to value $(-\mu)$.

Introduce the \emph{Shiryaev--Roberts statistic}\footnote{In a general case, the
Shiryaev--Roberts statistic is given by $\psi_t = (d\P_t^0/d\P_t^\infty) \int_0^t
(d\P^\infty_s/d\P^0_s) dG(s)$, where $\P^s_t=\P^s\mid\F_t^X$ for the measures $\P^0$
or $\P^\infty$, corresponding to that the disorder happens at time $t=0$ or
does not happen at all (for details, see~\cite{ZS12-e} and the proof of
the Lemma in Section \ref{proofs}).} $\psi=(\psi_t)_{t\ge0}$:
\begin{equation}\label{psi-def}
\psi_t = \exp\bigl(-\mu \tilde X_t - \mu^2 t/2\bigr) \left(\psi_0 + \rho
\int_0^t \exp\bigl(\mu \tilde X_s + \mu^2 s/2 \bigr) ds \right)
\end{equation}
with $\psi_0 = G(0)$. Applying the It\^o formula it is easy to see that
$\psi$ satisfies the stochastic differential equation
\begin{equation}\label{psi-sde}
d\psi_t = \rho dt - \mu \psi_t d \tilde X_t.
\end{equation}

On the measurable space $(\Omega,\F^X_T)$, $\F^X_T=\sigma(X_t;t\le T)$,
define the probability measures $\P^{(l)}$ and $\P^{(g)}$ such that $\tilde
X_t$ is a standard Brownian motion under $\P^{(l)}$ and $(\tilde X_t-\sigma
t)$ is a standard Brownian motion under $\P^{(g)}$. These measures will be
used to solve the problems $V^{(l)}$ and $V^{(g)}$ respectively. It is
well-known (see, e.\,g., \cite[Ch.~7]{LS00}) that $\P^{(l)}$ and $\P^{(g)}$
are equivalent on the space $(\Omega,\F^X_T)$.

For any $x\ge 0$, by $\E^{(l)}_x[\,\cdot\,]$ and $\E^{(g)}_x[\,\cdot\,]$ we
denote the mathematical expectations of functionals of the process
$(\psi_{t})_{t\ge0}$ defined by \eqref{psi-def}--\eqref{psi-sde} with the
initial condition $\psi_0=x$, when $\tilde X$ is respectively a standard
Brownian motion or a Brownian motion with drift $\sigma$. For brevity,
instead of $\E^{(l)}_{G(0)}[\,\cdot\,]$ and $\E^{(g)}_{G(0)}[\,\cdot\,]$
we simply write $\E^{(l)}[\,\cdot\,]$ and $\E^{(g)}[\,\cdot\,]$.

The main result of the paper is the following theorem.
\begin{theorem}
The optimal stopping times in the problems $V^{(l)}$ and $V^{(g)}$ are given
respectively by
\begin{align*}
&\tau^{(l)}_* = \inf\{t\ge 0: \psi_t \ge a^{(l)}(t)\} \wedge T,\\
&\tau^{(g)}_* = \inf\{t\ge 0: \psi_t \ge a^{(g)}(t)\} \wedge T,
\end{align*}
where $a^{(l)}(t)$, $a^{(g)}(t)$ are non-increasing functions on $[0,T]$
being the unique solutions of the integral equations ($t\in[0,T]$)
\begin{align}
&\int_0^{T-t} \E_{a^{(l)}(t)}^{(l)}
\bigl[\bigl(\mu_1-(\mu_1-\mu_2)\psi_s\bigr)\I\{\psi_s < a^{(l)}(t+s)\}\bigr] ds = 0, \label{al-eq}\\
&\int_0^{T-t} \E_{a^{(g)}(t)}^{(g)} \bigl[e^{\mu_1s}\bigl(\mu_1
(1-G(t+s)) - |\mu_2| \psi_s \bigr) \I\{\psi_s < a^{(g)}(t+s)\} \bigr] ds = 0 \label{ag-eq}
\end{align}
in the class of continuous bounded functions on $[0,T)$ satisfying the conditions
\begin{align}
&a^{(l)}(t) \ge \frac{\mu_1}{\mu_1-\mu_2} \text{ for }t\in[0,T),& 
&a^{(l)}(T) = \frac{\mu_1}{\mu_1-\mu_2}, \label{al-cond}\\
&a^{(g)}(t) \ge \frac{\mu_1}{|\mu_2|} (1-G(t)) \text{ for }t\in[0,T),&
&a^{(g)}(T) = \frac{\mu_1}{|\mu_2|} (1-G(T-)). \label{ag-cond}
\end{align}
The values $V^{(l)}$ and $V^{(g)}$ can be found by the formulas
\begin{align}
&V^{(l)} = \int_0^T \E^{(l)} \bigl[\bigl(\mu_1-(\mu_1-\mu_2)\psi_s\bigr)\I\{\psi_s < a^{(l)}(s)\} \bigr] ds, \label{l-profit}\\
&V^{(g)} = 1 + \int_0^T\E^{(g)} \bigl[e^{\mu_1s}\bigl(\mu_1
(1-G(s)) - |\mu_2| \psi_s \bigr) \I\{\psi_s < a^{(g)}(s)\} \bigr] ds. \label{g-profit}
\end{align}
\end{theorem}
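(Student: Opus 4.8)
The plan is to reduce each of the two problems to a single Markovian optimal stopping problem for the diffusion $\psi$, and then to treat the resulting finite-horizon free-boundary problem by the standard Peskir--Shiryaev program.

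First I would carry out the reduction. Writing the $\P$-dynamics of $X$ with respect to the observed filtration $\F^X_t$ in innovation form, $dX_t=(\mu_1-(\mu_1-\mu_2)\pi_t)\,dt+\sigma\,d\bar B_t$, where $\pi_t=\P(\theta\le t\mid\F^X_t)$ and $\bar B$ is the innovation Brownian motion, gives $\E X_\tau=\E\int_0^\tau(\mu_1-(\mu_1-\mu_2)\pi_t)\,dt$; since $dS_t=S_t\,dX_t$, the geometric problem becomes $\E S_\tau=1+\E\int_0^\tau S_t(\mu_1-(\mu_1-\mu_2)\pi_t)\,dt$. Passing to the equivalent measures $\P^{(l)}$ and $\P^{(g)}$ by the appropriate likelihood ratio, and using the identity $\psi_t=\pi_t\,(d\P/d\P^{(l)})|_{\F^X_t}$ together with \eqref{psi-sde}, which shows that $\psi$ is a time-homogeneous diffusion under each of these measures, I would rewrite the two problems as running-reward stopping problems $\sup_\tau\E^{(l)}\int_0^\tau g^{(l)}(t,\psi_t)\,dt$ and $1+\sup_\tau\E^{(g)}\int_0^\tau g^{(g)}(t,\psi_t)\,dt$, whose reward rates $g^{(l)}$, $g^{(g)}$ one reads off as the integrands appearing in \eqref{al-eq} and \eqref{ag-eq}.

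Next I would determine the structure of the optimal rule. Introduce the value function $V(t,x)=\sup_{\tau\le T-t}\E^{(\cdot)}_x\int_0^\tau g(t+s,\psi_s)\,ds$ with $V(T,\cdot)=0$. Because each reward rate is affine and decreasing in $x$, the set on which it is unprofitable to keep running is an up-set, so I expect the continuation region to have the one-sided form $C=\{x<a(t)\}$ and the optimal time to be the first entry of $\psi$ into $\{x\ge a(t)\}$. I would then establish the usual regularity: continuity of $V$, the $C^{1,2}$ property inside $C$, the principle of smooth fit $\partial_x V(t,a(t))=0$, and monotonicity of $a$ --- the boundary is non-increasing because increasing $t$ both shortens the remaining horizon and raises $G(t)$, shrinking $C$. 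Evaluating the reward rate at the terminal time, where no option value remains, pins down the endpoint values and gives \eqref{al-cond} and \eqref{ag-cond}. The integral equations and value formulas then follow from a Dynkin-type argument: in $C$ the function $V$ solves $\partial_t V+\mathcal L V=-g$ (with $\mathcal L$ the generator of $\psi$) while $V\equiv0$ on the stopping set, so applying the change-of-variable (It\^o--Tanaka) formula to $V(t+s,\psi_s)$, using smooth fit to kill the local-time term on the boundary, integrating up to $T-t$ and taking $\E^{(\cdot)}_x$ yields
\[
V(t,x)=\int_0^{T-t}\E^{(\cdot)}_x\bigl[g(t+s,\psi_s)\,\I\{\psi_s<a(t+s)\}\bigr]\,ds.
\]
Setting $x=a(t)$ and using $V(t,a(t))=0$ produces \eqref{al-eq} and \eqref{ag-eq}, while setting $t=0$, $x=G(0)$ produces \eqref{l-profit} and \eqref{g-profit}, with the additive constant $1$ and the factor $e^{\mu_1 s}$ entering in the geometric case.

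The hard part will be the uniqueness of the boundary in the stated class, which I would handle by Peskir's four-step method: given any continuous bounded solution $c$ of the integral equation satisfying \eqref{al-cond} (resp.\ \eqref{ag-cond}), define $U^c(t,x)$ by the representation formula above with $c$ in place of $a$, verify that $U^c$ vanishes on $\{x\ge c(t)\}$, and then compare $U^c$ with $V$ on the two regions $\{x\ge c(t)\}$ and $\{x<c(t)\}$ to force first $c\le a$ and then $c\ge a$, whence $c\equiv a$. The principal technical obstacles are exactly this uniqueness argument and the rigorous justification of smooth fit and of the monotonicity and continuity of $a$, both of which are complicated by the genuine time-inhomogeneity coming from $G(t)$ and by the degeneracy of the diffusion coefficient $\mu\psi$ of $\psi$ near the origin.
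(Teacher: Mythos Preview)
Your overall strategy matches the paper's exactly: first reduce each problem to a Markovian running-reward stopping problem for $\psi$ under the appropriate reference measure, then solve the resulting finite-horizon free-boundary problem. The differences are in execution. For the reduction, the paper does not use the innovation representation or the posterior process $\pi$; instead it works directly with the family of measures $(\P^u)_{0\le u\le T}$ corresponding to a disorder at the fixed time $u$, writes $\E(\tau-\theta)^+=\int_0^T\E^u[(\tau-u)^+]\,dG(u)$, inserts the explicit Radon--Nikodym density $d\P^u_s/d\P^{(l)}_s$, and interchanges the order of integration to recognise $\psi_s$; the geometric case is handled by computing $d\P_t/d\P^{(g)}_t$ explicitly and applying It\^o's formula to $e^{\mu_1 t}(\psi_t+1-G(t))$. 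Your route via $\pi_t$ and the identity $\psi_t=\pi_t\,(d\P_t/d\P^{(l)}_t)$ is a legitimate alternative that gets to the same reduced problems with comparable effort, though you should check carefully that it reproduces exactly the integrand $\mu_1-(\mu_1-\mu_2)\psi_s$ in the linear case rather than an equivalent but differently-shaped one. For the free-boundary step, the paper does no work at all: it simply invokes a general Proposition (proved in the authors' earlier paper and reproduced in the Appendix) covering problems of the form $\sup_{\tau\le T}\E\int_0^\tau e^{\lambda s}(f(s)-\psi_s)\,ds$ for the diffusion $d\psi_t=(\rho+b\psi_t)\,dt-\mu\psi_t\,dB_t$. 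What you outline---value function, one-sided continuation region from monotonicity of the reward rate, smooth fit, the It\^o--Tanaka representation formula, and Peskir's four-step uniqueness argument---is precisely the content of that Proposition's proof, so you are effectively re-deriving the cited result rather than taking a genuinely different path.
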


\begin{remark}
\textbf{1.} Regarding the difference between a risk-averse trader and a
risk-neutral trader mentioned in Section \ref{formulation}, observe that if
$G(T-)=1$, then $\P(\tau_*^{(g)}<T)=1$ because $a^{(g)}(T)=0$, while
$\P(\tau_*^{(l)}<T)<1$ because $a^{(l)}(T)>0$ (in the latter case the
process $\psi$ stays below $a^{(l)}(t)$ on the whole interval $[0,T]$ with
positive probability).

\textbf{2.} In the above mentioned papers \cite{BL97,NS09,EL13}, solutions to problems
\eqref{1} when $\theta$ is exponentially distributed were given in terms of
the posterior probability process $\pi_t = \P(\theta\le t \mid
\F_t^X)$. Using the Bayes formula, one can check that the processes
$\pi$ and $\psi$ are connected by the formula $\psi_t =
\pi_t(1-G(t))/(1-\pi_t)$ (see~\cite{ZS12-e}). Consequently, it is easy to
reformulate the Theorem in a such way that $\tau_*^{(l)}$ and $\tau_*^{(g)}$
are the first moments of time when the process $\pi$ crosses time-dependent
levels. We prefer to work with the process $\psi$ because it has a somewhat
simpler form than $\pi$, when $\theta$ is uniformly distributed.
\end{remark}

\paragraph{}
Equations \eqref{al-eq}, \eqref{ag-eq} can be solved numerically by
\emph{backward induction}: we fix a partition $0\le t_0<t_1<\ldots<t_n=T$ of
 $[0,T]$ and sequentially find the values
$a(t_n),\;a(t_{n-1}),\;\ldots,\allowbreak a(t_0)$. The value $a(T)$ can be
found from condition \eqref{al-cond} or \eqref{ag-cond} respectively. Having
found the values $a(t_k), a(t_{k+1}), \ldots,a(t_n)$ and numerically
computing integral \eqref{al-eq} or \eqref{ag-eq} for $t=t_{k-1}$ through
the values of the integrand at points $t_{k-1},t_k,\ldots,\allowbreak t_n$,
we obtain the equation, from which the value $a(t_{k-1})$ can be
found. Repeating this procedure, we find the value of $a(t)$ at every point
of the partition.

To compute the mathematical expectations in \eqref{al-eq}, \eqref{ag-eq},
\eqref{l-profit}, \eqref{g-profit}, one can use the Monte--Carlo method or
use the explicit formula for the transitional density of $\psi$ (see
e.\,g. \cite{ZS12-e}, where it was obtained from the joint law of an
exponent of a Brownian motion and its integral that can be found in
\cite{MY05}).

As an example of a numerical solution of equations \eqref{al-eq},
\eqref{ag-eq}, on Figure~\ref{fig-1} we present the optimal stopping boundaries
for the case $T=1$, $\mu_1=1$, $\mu_2=-1$, $\sigma=1$, $G(t) = t$ for
$t\le 1$.

\begin{figure}[h]
\centering
\includegraphics[width=6.6cm]{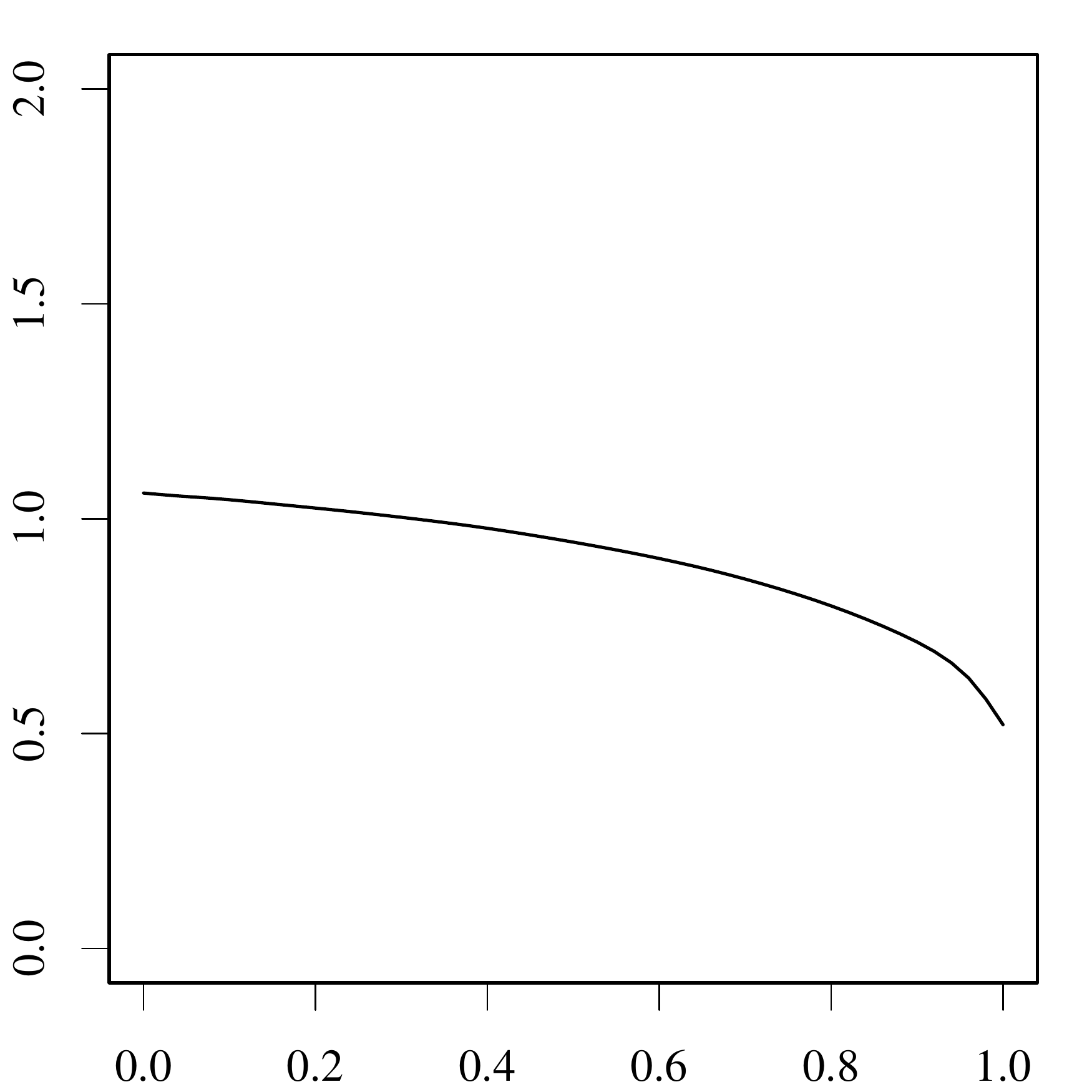}
\hfill
\includegraphics[width=6.6cm]{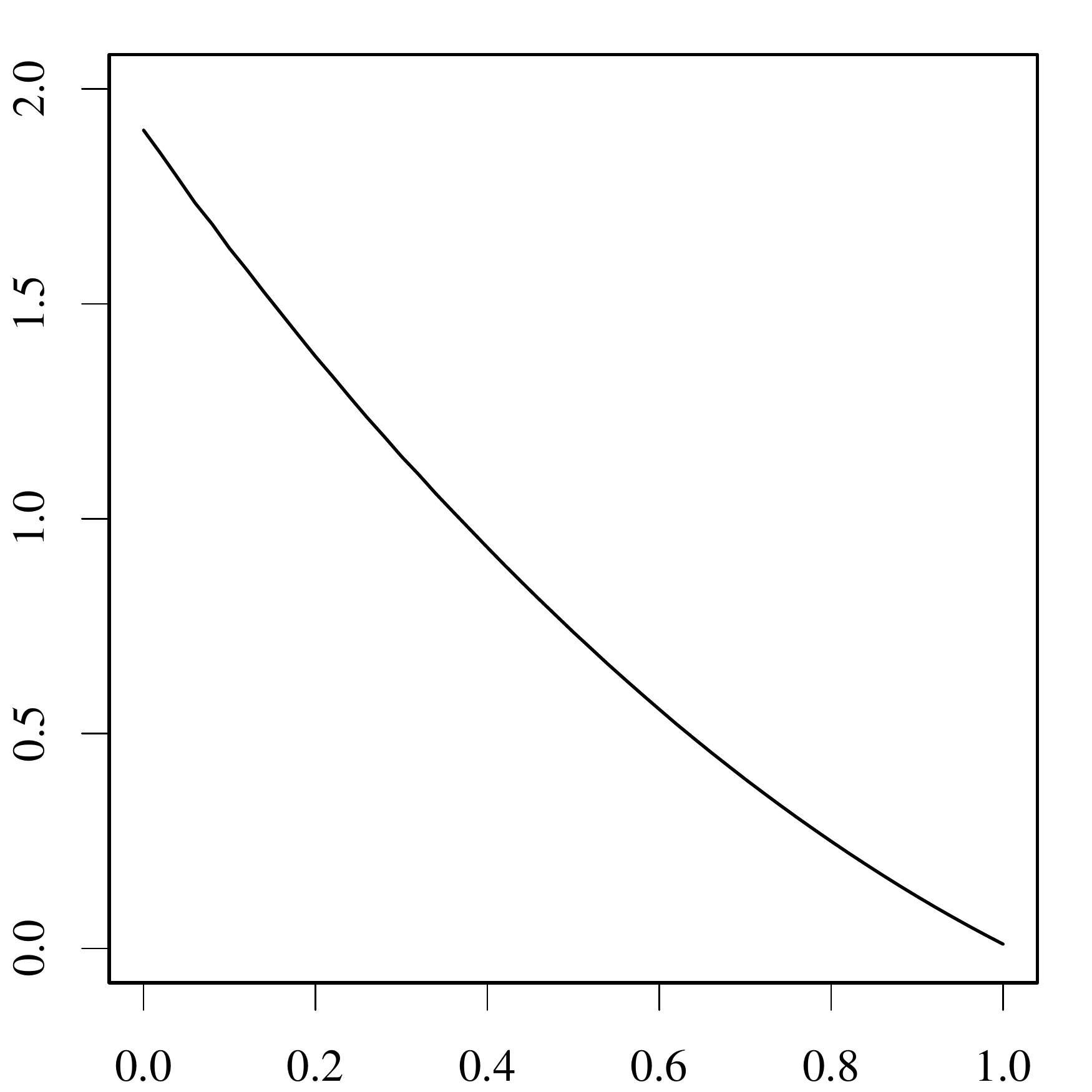}
\caption{The optimal stopping boundaries in the problems $V^{(l)}$ (left) and
$V^{(g)}$ (right) in the case $T=1$, $\mu_1=1$, $\mu_2=-1$, $\sigma=1$,
$G(t) = t$ for $t\le 1$.}
\label{fig-1}
\end{figure}

\section{Proof of the theorem}
\label{proofs}
\paragraph{}
To prove the theorem, we first reduce problems $V^{(l)}$ and $V^{(g)}$ to
optimal stopping problems for the process $\psi$, and then apply the
Proposition from the Appendix, which was proved in \cite{ZS12-e}. The method
we use is based on the ideas of \cite{FS06, BL97}.

\begin{lemma}
The following formulae hold:
\begin{align*}
&V^{(l)} = \sup_{\tau\in\M} \E^{(l)} \int_0^\tau \bigl(\mu_1 - (\mu_1-\mu_2)
\psi_s\bigr) ds,\\ 
&V^{(g)}  = 1 + \sup_{\tau\in\M} \E^{(g)} \int_0^\tau e^{\mu_1 s}
\bigl[\mu_1(1-G(s)) - |\mu_2| \psi_s \bigr] ds.
\end{align*}

The supremum in each formula is attained at the stopping time which is
optimal in the corresponding problem \eqref{1}.
\end{lemma}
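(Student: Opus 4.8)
The plan is to pass from the original disorder problems, which live on the filtration $\F^X$ under $\P$, to Markovian optimal stopping problems for the diffusion $\psi$ under the equivalent reference measures $\P^{(l)}$ and $\P^{(g)}$, by a single explicit change of measure. First I would identify the Radon--Nikodym density of $\P$ with respect to $\P^{(l)}$ on $\F^X_t$. Writing $\P$ as the mixture $\int_{[0,T]}\P^u\,dG(u)$ over the location $u$ of the disorder, where $\P^u$ is the law under which $\tilde X$ has drift $0$ on $[0,u]$ and $(-\mu)$ afterwards, Girsanov's theorem gives $d\P^u/d\P^{(l)}|_{\F^X_t}=\exp(-\mu(\tilde X_t-\tilde X_{u\wedge t})-\tfrac{\mu^2}{2}(t-u)^+)$; integrating against $dG(u)$ and separating the contributions of $u\le t$ and $u>t$ identifies the density as $Z_t:=\psi_t+(1-G(t))$, with $\psi_t$ exactly the statistic of \eqref{psi-def}. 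From \eqref{psi-sde} one checks $dZ_t=-\mu\psi_t\,d\tilde X_t$, so $Z$ is a $\P^{(l)}$-martingale with $Z_0=\psi_0+1-G(0)=1$, confirming it is a legitimate density; the Kallianpur--Striebel formula then yields $\pi_t=\psi_t/Z_t$, which is the relation of Remark~2.

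Second, I would rewrite each payoff as an expectation under the reference measure. Since $X_\tau$ is $\F^X_\tau$-measurable and $Z$ is a $\P^{(l)}$-martingale, the change of measure gives directly $\E X_\tau=\E^{(l)}[Z_\tau X_\tau]$ for every $\tau\in\M$. For the geometric problem the further density $\mathcal E_t:=\exp(\sigma\tilde X_t-\sigma^2 t/2)=d\P^{(g)}/d\P^{(l)}|_{\F^X_t}$ absorbs the exponential: since $S_t=\exp(X_t-\sigma^2 t/2)=e^{\mu_1 t}\mathcal E_t$, a second change of measure gives $\E S_\tau=\E^{(l)}[Z_\tau e^{\mu_1\tau}\mathcal E_\tau]=\E^{(g)}[Z_\tau e^{\mu_1\tau}]$.

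Third, I would apply It\^o's formula to the products $Z_tX_t$ (under $\P^{(l)}$, where $dX_t=\mu_1\,dt+\sigma\,d\tilde X_t$) and $Z_t e^{\mu_1 t}$ (under $\P^{(g)}$, where $\tilde X_t-\sigma t$ is Brownian). Because $dZ_t$ contributes only a $d\tilde X_t$-term, each product splits into an absolutely continuous part, whose density is read off as the corresponding running reward, plus a stochastic integral. For the geometric case the drift is $e^{\mu_1 t}(\mu_1 Z_t-(\mu_1-\mu_2)\psi_t)=e^{\mu_1 t}(\mu_1(1-G(t))-|\mu_2|\psi_t)$, and the initial value $Z_0=1$ produces the additive constant $1$; the linear case is handled analogously, with $X_0=0$ accounting for the absence of a constant. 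Integrating from $0$ to $\tau$ and taking expectations, the stochastic integrals vanishing, yields the two identities for every fixed $\tau\in\M$. Taking the supremum then gives the Lemma, and since the identities are exact for each individual $\tau$ the optimizers transfer without change between \eqref{1} and the reduced problems.

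The main obstacle is analytic rather than structural: one must verify that the two It\^o remainders are genuine martingales, not merely local ones. This is most delicate in the linear case, where the stochastic-integral integrand is $\sigma Z_t-\mu\psi_t X_t$ and involves the unbounded factors $X_t$ and $\psi_t$; the cleanest route is an $L^2(\P^{(l)})$ bound on $[0,T]$, exploiting that $\psi$ and $e^{\sigma\tilde X}$ possess finite moments of every order on a finite interval, together with Novikov's condition to license the two changes of measure and the equivalence of $\P^{(l)}$ and $\P^{(g)}$ on $\F^X_T$. A secondary point is the careful treatment of the atoms of $G$ at $0$ and $T$ (the quantities $G(0)$ and $1-G(T-)$), which enter through $\psi_0=G(0)$ and surface again in the boundary conditions \eqref{al-cond} and \eqref{ag-cond}.
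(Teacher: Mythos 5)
Your reduction of the geometric problem is essentially the paper's own argument: identify $d\P_t/d\P^{(g)}_t=e^{-\sigma\tilde X_t+\sigma^2t/2}\bigl(\psi_t+1-G(t)\bigr)$, write $\E S_\tau=\E^{(g)}\bigl[e^{\mu_1\tau}(\psi_\tau+1-G(\tau))\bigr]$, and apply It\^o under $\P^{(g)}$. That half is fine, including your (correct) observation that one must check the stochastic integral is a true martingale.

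The linear half is where your proposal has a genuine gap, and the word ``analogously'' is hiding it. Carry out your own It\^o computation on $Z_tX_t$ with $Z_t=\psi_t+1-G(t)$, $dZ_t=-\mu\psi_t\,d\tilde X_t$ and $dX_t=\mu_1\,dt+\sigma\,d\tilde X_t$ under $\P^{(l)}$: the drift is
\[
\mu_1 Z_t-\mu\sigma\psi_t=\mu_1\bigl(1-G(t)\bigr)-|\mu_2|\psi_t ,
\]
i.e.\ exactly the geometric integrand without the factor $e^{\mu_1 t}$ --- \emph{not} the integrand $\mu_1-(\mu_1-\mu_2)\psi_t$ asserted in the first formula of the Lemma. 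The two candidate reduced rewards differ, for a given $\tau$, by
\[
\mu_1\,\E^{(l)}\!\int_0^\tau\bigl(G(s)-\psi_s\bigr)\,ds
=-\mu_1\,\E^{(l)}\bigl[(G(\tau)-\psi_\tau)(T-\tau)\bigr],
\]
since $G(s)-\psi_s$ is a mean-zero $\P^{(l)}$-martingale; this vanishes for deterministic $\tau$ but not for general stopping times (take $\tau=\inf\{t:\psi_t\ge 1\}\wedge T$). So your method, as described, does not produce the stated identity, and you cannot simply declare the linear case ``handled analogously.'' The paper takes a different route here: it writes $\E X_\tau=\E\bigl[\mu_1\tau+(\mu_2-\mu_1)(\tau-\theta)^+\bigr]$ using $\E B_\tau=0$, converts only the term $\E(\tau-\theta)^+=\int_0^T\E^u[(\tau-u)^+]\,dG(u)$ to the reference measure via the per-$u$ Girsanov densities and Fubini, obtaining $\E^{(l)}\int_0^\tau\psi_s\,ds$, and then keeps the term $\mu_1\E\tau$ as $\mu_1\E^{(l)}\tau$. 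Reconciling that step with your computation amounts to the identity $\E\tau=\E^{(l)}\tau$, equivalently $\E^{(l)}\int_0^\tau(Z_s-1)\,ds=0$, which is the same non-identity as above. You therefore need to either confront this mismatch explicitly (and decide which integrand the first formula of the Lemma should actually carry) or reproduce the paper's decomposition and justify the treatment of the $\mu_1\E\tau$ term; in the present form the linear identity is not proved.
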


\begin{proof}
It is sufficient to show that for any stopping time $\tau\in\M$
\begin{align}
&\E X_\tau = \E^{(l)} \int_0^\tau \bigl(\mu_1 - (\mu_1-\mu_2) \psi_s\bigr)
ds, \label{eq:4}\\
&\E \exp(X_\tau - \sigma^2\tau/2) = 1 + \E^{(g)} \int_0^\tau e^{\mu_1 s}
\bigl[\mu_1(1-G(s)) - |\mu_2| \psi_s \bigr] ds.\label{eq:5}
\end{align}

On the measurable space $(\Omega,\F^X_T)$ define the family of
probability measures $(\P^u)_{0\le u\le T}$ such that under $\P^u$ the
disorder occurs at the fixed time $u$, i.\,e. for each $0\le u \le T$ the
process $ X$ can be represented as $ X_t=\mu_1t + (\mu_2-\mu_1)(t-u)^+ +
\sigma B_t^{(u)}$, where $B^{(u)}$ is a standard Brownian motion under
$\P^u$. By $\E^u[\,\cdot\,]$ we denote the mathematical expectation with
respect to $\P^u$ and by $\P_t$, $\P^u_t$, $\P_t^{(l)}$, $\P_t^{(g)}$ we
denote the restrictions of the corresponding measures to the
$\sigma$-algebra $\F_t^X=\sigma(X_s;s\le t)$, $0\le t \le T$.

Let us prove \eqref{eq:4}. Since the Brownian motion $B$ is a zero-mean
martingale, we have
\begin{equation}\label{eq:6}
\E X_\tau = \E\bigl[ \mu_1\tau +
(\mu_2-\mu_1)(\tau-\theta)^+\bigr].
\end{equation}
Consider the second term in the sum:
\begin{equation}
\E (\tau-\theta)^+ = \int_0^T \E^u[(\tau-u)\I(\tau>u)] d G(u).\label{eq:1}
\end{equation}
Observe that for any $0\le u\le T$ the following relation is valid:
\begin{multline}
\E^u [(\tau-u)\I(\tau>u)] =
\int_u^T \E^u\I(s\le\tau) ds  \\=
\int_u^T \E^{(l)} \bigl[ \exp\bigl(-\mu(\tilde X_s-\tilde X_u) -
\mu^2(s-u)/2\bigr) \I(s\le\tau) 
\bigr] ds,\label{eq:2}
\end{multline}
where we use that $\I(s\le\tau)$ is an $\F_s^X$-measurable random
variable and
\[
\frac{d\P^u_s}{d\P^{(l)}_s} = \exp\bigl(-\mu(\tilde X_s-\tilde X_u) -
\mu^2(s-u)/2\bigr)
\]
(the explicit formula for the density of the measure generated by one It\^o
process with respect to the measure generated by another It\^o process can
be found in e.\,g. \cite{LS00}).  From \eqref{eq:1}--\eqref{eq:2}, changing
the order of integration we find
\[
\begin{aligned}
\E (\tau-\theta)^+ &= \E^{(l)} \int_0^\tau \int_0^s \exp\bigl(-\mu(\tilde X_s-\tilde
X_u) - \mu^2(s-u)/2\bigr) dG(u) ds \\ &= \E^{(l)} \int_0^\tau \psi_s ds.
\end{aligned}
\]
Then using \eqref{eq:6}, we obtain representation \eqref{eq:4}. 

Let us prove \eqref{eq:5}. We have
\[
\frac{d\P^s_t}{d \P_t^{(g)}} = \begin{cases} \exp\bigl(-\sigma \tilde X_t
+\sigma^2t/2\bigr)\cdot \exp\bigl(-\mu(\tilde X_t-\tilde X_s\bigr) -
\mu^2(t-s)/2), &s\le t,\\
\exp\bigl(-\sigma \tilde X_t +\sigma^2t/2\bigr), &s> t,
\end{cases}
\]
which implies
\[
\begin{split}
\frac{d \P_t}{d\P^{(g)}_t} &= \int_0^T \frac{d\P^s_t}{d \P^{(g)}_t} d
G(s) \\ &=
\begin{aligned}[t]\exp\bigl(&-\sigma \tilde X_t + \sigma^2t/2\bigr) \\
&\times \left( \int_0^t \exp\bigl(-\mu(\tilde X_t-\tilde X_s) -
\mu^2(t-s)/2\bigr)\, dG(s) + 1-G(t) \right)
\end{aligned}
\\ &=
\exp\bigl(-\sigma \tilde X_t +\sigma^2t/2\bigr) \bigl( \psi_t + 1 - G(t)\bigr).
\end{split}
\]
Consequently,
\[
\begin{split}
\E \exp(X_\tau - \sigma^2\tau/2) &= \E^{(g)} \left(\frac{d\P_\tau}{d\P_\tau^{(g)}}
\exp(X_\tau-\sigma^2\tau/2) \right) \\ &=
\E^{(g)} \bigl[ e^{\mu_1 \tau}(\psi_\tau+1-G(\tau)) \bigr].
\end{split}
\]
Applying the It\^o formula, we get
\begin{multline*}
e^{\mu_1 \tau}(\psi_\tau+1-G(\tau)) \\= 1 + \int_0^\tau e^{\mu_1 s} \left[\mu_2 \psi_s
+ \mu_1(1-G(s))\right] ds + \mu \int _0^\tau \psi_s d (\tilde X_s-\sigma s).
\end{multline*}
Taking the mathematical expectation $\E^{(g)}[\,\cdot\,]$ of the both sides
and using that $(\tilde X_s-\sigma s)$ is a standard Brownian motion under
$\P^{(g)}$ and, hence, the expectation of the stochastic integral equals
zero, we obtain \eqref{eq:5}.
\end{proof}

\paragraph{}
Now the proof of the Theorem follows from the Proposition in the
Appendix -- for the problem $V^{(l)}$ we use that the process $\psi$
satisfies equation \eqref{psi-sde} with $\tilde X$ being a Brownian motion
under the measure $\P^{(l)}$, and for the problem $V^{(g)}$ we use that
$\psi$ satisfies the equation
\begin{equation}
d \psi_t = \bigl(\rho - (\mu_1-\mu_2) \psi_t\bigr) dt - \mu \psi_t d(\tilde
X_t - \sigma t),\label{eq:3}
\end{equation}
where $(\tilde X_t - \sigma t)$ is a Brownian motion under $\P^{(g)}$.

\section{Acknowledgements}
The authors are grateful to Prof.~H.\,R.~Lerche and A.\,A.~Levinskiy for
valuable remarks. The work is supported by Laboratory for Structural Methods
of Data Analysis in Predictive Modeling, MIPT, RF government grant,
ag. 11.G34.31.0073. The work of M.\,V.~Zhitlukhin is also supported by The
Russian Foundation for Basic Research, grant 12-01-31449-mol\_a.

\section*{Appendix}
Let $B=(B_t)_{t\ge0}$ be a standard Brownian motion defined on a filtered
probability space $(\Omega,\F,(\F_t)_{t\ge0},\P)$ and
$\psi=(\psi_t)_{t\ge 0}$ be a stochastic process satisfying the stochastic
differential equation (cf. \eqref{psi-sde},~\eqref{eq:3})
\begin{equation}\label{eq:28}
d \psi_t = (\rho + b \psi_t) dt - \mu \psi_t dB_t,
\end{equation}
where $b,\mu\in\mathbb{R}$ and $\rho> 0$.

Consider the optimal stopping problem consisting in finding the quantity
\begin{equation}\label{eq:27}
V = \sup_{\tau \le T} \E \int_0^\tau e^{\lambda s} (f(s)-\psi_s) ds, 
\end{equation}
where $\lambda\in \mathbb{R}$, and $f(s)$ is a non-increasing bounded
function which is continuous and strictly positive on $[0,T)$. The supremum
in \eqref{eq:27} is taken over all stopping times $\tau$ of the filtration
$(\F_t)_{t\ge0}$ satisfying the condition $\tau\le T$ a.s.

For arbitrary $x\ge 0$, let $\E_x[\,\cdot\,]$ denote the
mathematical expectation of functionals of the process
$(\psi_{t})_{t\ge0}$, satisfying \eqref{eq:28} with
$\psi_0 = x$.

The following result was proved in \cite{ZS12-e}.

\begin{proposition}
The optimal stopping time in problem \eqref{eq:27} is given by 
\[
\tau_* = \inf\{t\ge 0: \psi_t \ge a(t)\} \wedge T,
\]
where $a(t)$ is a non-increasing function on $[0,T]$ being the
unique solution of the equation ($t\in[0,T]$)
\[
\int_0^{T-t} \E_{a(t)} \bigl[e^{\lambda s}(f(t+s)-\psi_s) \I\{\psi_s < a(t+s)\}
\bigr] ds = 0
\]
in the class of continuous bounded functions on $[0,T)$ satisfying the conditions
\[
a(t) \ge f(t) \text{ for }t\in[0,T), \qquad
a(T) = f(T-).
\]
The quantity $V$ can be found by the formula
\[
V= \int_0^T \E_{\psi_0}\bigl[e^{\lambda s}(\psi_s-f(s)) \I\{\psi_s<a(s)\}\bigr] ds.   
\]
\end{proposition}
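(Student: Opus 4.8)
The plan is to treat \eqref{eq:27} as a standard finite-horizon optimal stopping problem for the time-homogeneous diffusion $\psi$ solving \eqref{eq:28}, and to apply the free-boundary method (in the spirit of Peskir--Shiryaev). First I would pass to the Markovian value function
\[
V(t,x)=\sup_{0\le\sigma\le T-t}\E_x\int_0^\sigma e^{\lambda(t+u)}\bigl(f(t+u)-\psi_u\bigr)\,du,
\]
so that $V=V(0,\psi_0)$; here I use that the coefficients of \eqref{eq:28} do not depend on time, so $\psi$ started at $(t,x)$ agrees in law with $\psi$ started at $(0,x)$ and time-shifted. Since $\rho>0$ the solution of \eqref{eq:28} stays nonnegative, and by the comparison theorem for one-dimensional SDEs it is pathwise monotone and continuous in its initial value; this gives finiteness and joint continuity of $V$, so the general theory of optimal stopping applies: with $C=\{V>0\}$ and $D=\{V=0\}$, the first entry time of $(t,\psi_t)$ into $D$ is optimal.

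Next I would pin down the geometry of $C$ and $D$. Writing $\Phi(t,x)=e^{-\lambda t}V(t,x)=\sup_{0\le\sigma\le T-t}\E_x\int_0^\sigma e^{\lambda u}\bigl(f(t+u)-\psi_u\bigr)\,du$, one has $C=\{\Phi>0\}$, $D=\{\Phi=0\}$. Because $e^{\lambda u}>0$ and $x\mapsto\psi_u$ is pathwise non-decreasing, $x\mapsto\Phi(t,x)$ is non-increasing, so each section $D_t$ is a half-line $[a(t),\infty)$, which defines the candidate boundary $a$. The bound $a(t)\ge f(t)$ follows since for $x<f(t)$ the integrand is strictly positive on a short interval, so stopping at once is suboptimal. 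Monotonicity of $a$ is immediate from this representation: for $t_1<t_2$ every horizon $\sigma\le T-t_2$ is admissible at $t_1$ and $f(t_1+u)\ge f(t_2+u)$, whence $\Phi(t_1,x)\ge\Phi(t_2,x)$, so $C$ grows as $t$ decreases and $a$ is non-increasing. The terminal value $a(T)=f(T-)$ reflects that at the last instant one stops precisely when the rate $f-\psi$ turns negative.

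I would then establish the analytic structure and derive the equation. Standard arguments give $V\in C^{1,2}$ on $C$, where it solves
\[
\frac{\partial V}{\partial t}+(\rho+bx)\frac{\partial V}{\partial x}+\tfrac12\mu^2x^2\frac{\partial^2V}{\partial x^2}=-e^{\lambda t}\bigl(f(t)-x\bigr),
\]
with $V=0$ on $D$ and $V(T,\cdot)=0$, together with continuous and smooth fit, i.e. $V$ and $V_x$ extend continuously across $x=a(t)$ with $V_x(t,a(t))=0$; smooth fit is justified by regularity of the boundary points, which lie in $\{x>0\}$, where $\psi$ is nondegenerate (recall $f>0$ on $[0,T)$). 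Since $V_{xx}$ need not extend continuously across $a$, I would apply the local time--space change-of-variable formula to $V(t+s,\psi_s)$ started at $(t,a(t))$. Taking $\E_{a(t)}$, the generator term equals $-e^{\lambda(t+s)}(f(t+s)-\psi_s)$ on $C$ and $0$ in the interior of $D$; the martingale and boundary local-time terms vanish in expectation (the latter by smooth fit), and $V(T,\cdot)=V(t,a(t))=0$ removes the endpoint values. What remains, after dividing by $e^{\lambda t}$, is exactly the integral equation of the Proposition, and the same computation started at $(0,\psi_0)$ yields the representation for $V$.

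The main obstacle, as always in problems of this type, will be proving that $a$ is the \emph{unique} solution of the integral equation within the prescribed class. I would follow the four-step verification argument: given any continuous bounded $c$ with $c(t)\ge f(t)$ and $c(T)=f(T-)$ solving the equation, define a candidate $U^c$ by the right-hand side of the value representation with $a$ replaced by $c$; verify, using the integral equation itself, that $U^c\equiv0$ on $\{x\ge c(t)\}$ and that $U^c$ solves the above PDE on $\{x<c(t)\}$; and then compare $U^c$ with $V$ through the generalized It\^o formula. A careful analysis of the sign of $f-\psi$ on the overlap and on the symmetric difference of the two stopping regions then shows that neither $c>a$ nor $c<a$ can occur on a set of positive measure, forcing $c\equiv a$. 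This step is the most delicate, since it leans on all the regularity (continuity of $c$, smooth fit for $U^c$) obtained earlier and on a precise bookkeeping of the running cost.
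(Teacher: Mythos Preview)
The paper does not actually prove this Proposition: it is stated in the Appendix with the sentence ``The following result was proved in \cite{ZS12-e}'' and no argument is given here. So there is no proof in the present paper to compare your attempt against.

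That said, your sketch is the standard free-boundary route (value function, monotonicity in $x$ via pathwise comparison, monotonicity in $t$ via monotonicity of $f$, PDE in the continuation set, smooth fit, local-time--space formula to derive the integral equation, and the four-step uniqueness verification in the style of Peskir). This is essentially the method one expects the cited paper \cite{ZS12-e} to use, and the outline is sound. Two points you should tighten if you carry it out in full: (i) you have not argued that $a(t)<\infty$ (boundedness of the boundary), which requires showing that for $x$ sufficiently large every stopping time gives a nonpositive payoff --- this uses that $\psi$ started high tends to stay above $f$ long enough that the integral cannot be made positive; and (ii) continuity of $a$ (needed for the class in which uniqueness is claimed) must be proved, typically via the continuity of $V$ together with the strict inequality $V>0$ on $C$. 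Both are routine but not automatic.
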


\setlength{\bibsep}{0.65em}

\end{document}